\numberwithin{equation}{section}
\theoremstyle{plain}
\newtheorem{thm}[equation]{Theorem}
\newtheorem{cor}[equation]{Corollary}
\newtheorem{lemma}[equation]{Lemma}
\theoremstyle{definition}
\newtheorem{remark}[equation]{Remark}
\newcommand{\F}{\mathbb F}
\renewcommand{\bar}[1]{#1\llap{$\overline{\phantom{\rm#1}}$}}
\title[Permutation polynomials from R\'edei functions]{Permutation polynomials on $\F_q$ induced from R\'edei function bijections on subgroups of $\F_q^*$}
\author{Michael E. Zieve}
\address{
  Department of Mathematics,
  University of Michigan,
  Ann Arbor, MI 48109--1043,
  USA
}
\email{zieve@umich.edu}
\urladdr{www.math.lsa.umich.edu/$\sim$zieve/}
\date{07 October 2013}
\begin{document}

\begin{abstract}
We construct classes of permutation polynomials over $\F_{Q^2}$ by exhibiting classes of low-degree rational functions over $\F_{Q^2}$ which induce bijections on the set of $(Q+1)$-th roots of unity.  As a consequence, we prove two conjectures about permutation trinomials from a recent paper by Tu, Zeng, Hu and Li.
\end{abstract}

\maketitle

\section{Introduction}

A polynomial $f(x)\in\F_q[x]$ is called a \emph{permutation polynomial} if the function
$\alpha\mapsto f(\alpha)$ induces a permutation of $\F_q$.  Many authors have determined collections of
permutation polynomials having especially simple shapes.  The vast majority of these polynomials have
the form $x^r h(x^d)$ where $h\in\F_q[x]$ and $d>1$ is a divisor of $q-1$.  The reason this form is special is that a general result (see Lemma~\ref{lem}) asserts that
$x^r h(x^d)$ permutes $\F_q$ if and only if $\gcd(r,d)=1$ and $x^r h(x)^d$ permutes the set of $(q-1)/d$-th roots of unity in $\F_q^*$.  This leads to the question of producing collections of polynomials which permute the set $\mu_k$ of $k$-th roots of unity in $\F_q$ for certain values of $k$.  
In this paper we introduce a new variant of this construction, in which the induced function on $\mu_k$ is most naturally given by a rational function rather than a polynomial.  It is perhaps surprising that there are permutations of $\mu_k$ which can be represented by a rational function having an especially simple form, but which cannot be represented by an especially simple polynomial.  We obtain the following classes of permutation polynomials.

\begin{thm} \label{thmb} Let $Q$ be a prime power, let $n>0$ and $k\ge 0$ be integers,
and let $\beta,\gamma\in\F_{Q^2}$ satisfy $\beta^{Q+1}=1$ and $\gamma^{Q+1}\ne 1$.  Then
\[
f(x):=x^{n+k(Q+1)} \cdot\left( (\gamma x^{Q-1}-\beta)^n - \gamma (x^{Q-1}-\gamma^Q\beta)^n \right)
\]
permutes\/ $\F_{Q^2}$ if and only if $\gcd(n+2k,Q-1)=1$ and $\gcd(n,Q+1)=1$.
\end{thm}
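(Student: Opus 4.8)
The plan is to reduce the statement to one about the set $\mu_{Q+1}$ of $(Q+1)$-th roots of unity by way of Lemma~\ref{lem}, and then to exhibit the induced map on $\mu_{Q+1}$ as a composition of two M\"obius transformations with an $n$-th power map. First I would set $q:=Q^2$, so that $q-1=(Q-1)(Q+1)$, and record the identity $k(Q+1)=k(Q-1)+2k$, which lets me recast
\[
f(x)=x^{\,n+2k}\,h\!\left(x^{Q-1}\right),\qquad h(y):=y^{k}\bigl((\gamma y-\beta)^{n}-\gamma(y-\gamma^{Q}\beta)^{n}\bigr),
\]
i.e.\ in the shape $x^{r}h(x^{d})$ with $r=n+2k$ and $d=Q-1$. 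Lemma~\ref{lem} then says that $f$ permutes $\F_{Q^2}$ if and only if $\gcd(n+2k,Q-1)=1$ and the function $\varphi(x):=x^{r}h(x)^{d}$ permutes $\mu_{(q-1)/d}=\mu_{Q+1}$; this already isolates the first gcd condition, so the whole task becomes showing that $\varphi$ permutes $\mu_{Q+1}$ exactly when $\gcd(n,Q+1)=1$.

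The heart of the argument is to simplify $\varphi$ on $\mu_{Q+1}$. On this set one has $x^{Q+1}=1$, hence $x^Q=x^{-1}$; likewise $\beta^Q=\beta^{-1}$ from $\beta^{Q+1}=1$, and $\gamma^{Q^2}=\gamma$. Substituting $x^{k(Q+1)}=1$ collapses $\varphi(x)$ to $x^{n}g(x)^{Q-1}$ with $g(y):=(\gamma y-\beta)^n-\gamma(y-\gamma^Q\beta)^n$, and applying the Frobenius $z\mapsto z^Q$ to $g(x)$ together with the three identities above lets me rewrite $g(x)^{Q-1}=g(x)^Q/g(x)$ in closed form. The computation I expect to carry out shows that
\[
\varphi(x)=\tau\!\left(\sigma(x)^{n}\right),\qquad \sigma(x):=\frac{\gamma x-\beta}{x-\gamma^{Q}\beta},\qquad \tau(t):=\frac{(-1)^{n}}{\beta^{n}}\cdot\frac{1-\gamma^{Q}t}{t-\gamma},
\]
so that $\varphi$ factors as a M\"obius transformation $\sigma$, followed by the $n$-th power map, followed by a M\"obius transformation $\tau$. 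Along the way I must check that $g(x)\ne 0$ on $\mu_{Q+1}$ so the rational expressions are legitimate; this follows because $g(x)=0$ would force $\sigma(x)^n=\gamma$, whereas $\sigma(x)^n\in\mu_{Q+1}$ while $\gamma\notin\mu_{Q+1}$.

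Next I would verify that both $\sigma$ and $\tau$ restrict to bijections of $\mu_{Q+1}$, regardless of $n$ and $k$. Each is an honest M\"obius transformation: the determinant of $\sigma$ is $\beta(1-\gamma^{Q+1})$ and that of $\tau$ is a nonzero scalar multiple of $\gamma^{Q+1}-1$, both nonzero since $\gamma^{Q+1}\ne1$ and $\beta\ne0$. Using $x^Q=x^{-1}$, $\beta^Q=\beta^{-1}$, $\gamma^{Q^2}=\gamma$ (and $\beta^{-n(Q+1)}=1$), a direct computation gives $\sigma(x)^{Q}=\sigma(x)^{-1}$ and $\tau(t)^{Q}=\tau(t)^{-1}$ for $x,t\in\mu_{Q+1}$, so each map sends $\mu_{Q+1}$ into itself; moreover the poles $x=\gamma^Q\beta$ and $t=\gamma$, and the corresponding zeros, lie outside $\mu_{Q+1}$ (again because $\gamma^{Q+1}\ne1$), so no point of $\mu_{Q+1}$ is sent to $0$ or $\infty$. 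An injective self-map of the finite set $\mu_{Q+1}$ is automatically a bijection, so $\sigma$ and $\tau$ are bijections of $\mu_{Q+1}$ unconditionally. Consequently $\varphi=\tau\circ p_n\circ\sigma$ (with $p_n(z)=z^n$) is a bijection of $\mu_{Q+1}$ if and only if the power map $p_n$ is, and since $\mu_{Q+1}$ is cyclic of order $Q+1$ this happens precisely when $\gcd(n,Q+1)=1$. Combining with the first gcd condition from Lemma~\ref{lem} yields the theorem.

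The step I expect to be the main obstacle is the closed-form simplification of $g(x)^{Q-1}$ in the middle paragraph: getting the Frobenius bookkeeping, the clearing of denominators, and the factors of $(-1)^n$ and $\beta^{-n}$ exactly right so that the unwieldy $x^n g(x)^Q/g(x)$ genuinely collapses to the clean M\"obius--power--M\"obius form $\tau(\sigma(x)^n)$. Once that identity is in hand, the bijectivity verifications are routine norm-one computations and the final equivalence is immediate.
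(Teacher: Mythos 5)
Your proposal is correct and follows essentially the same route as the paper: reduce via Lemma~\ref{lem} to the induced map on $\mu_{Q+1}$, compute $h(\alpha)^{Q}$ by Frobenius using $\alpha^Q=\alpha^{-1}$ and $\beta^Q=\beta^{-1}$, and factor the result as M\"obius $\circ$ ($n$-th power) $\circ$ M\"obius, so that bijectivity comes down to $\gcd(n,Q+1)=1$ (your identity $\varphi=\tau\circ p_n\circ\sigma$ on $\mu_{Q+1}$ checks out, as do the nonvanishing and no-roots-in-$\mu_{Q+1}$ arguments). The only differences are cosmetic: you verify directly via the norm computations $\sigma(x)^Q=\sigma(x)^{-1}$ and $\tau(t)^Q=\tau(t)^{-1}$ that your two M\"obius maps preserve $\mu_{Q+1}$, and you keep the constant $(-1)^n\beta^{-n}$ inside $\tau$, whereas the paper invokes its classification Lemma~\ref{q+1toq+1} and normalizes constants (using that $\gcd(n+2k,Q-1)=1$ forces $(-1)^n=-1$) to obtain the exact conjugation $G=\ell^{-1}\circ x^n\circ\ell$ with $\ell=1/\sigma$.
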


\begin{thm} \label{thma} Let $Q$ be a prime power, let $n,k$ be integers with $n>0$ and $k\ge 0$,
and let $\beta,\delta\in\F_{Q^2}$ satisfy $\beta^{Q+1}=1$ and $\delta\notin\F_Q$.  Then
\[
f(x):=x^{n+k(Q+1)} \cdot \left((\delta x^{Q-1}-\beta \delta^Q)^n - \delta(x^{Q-1}-\beta)^n\right)
\]
permutes\/ $\F_{Q^2}$ if and only if $\gcd(n(n+2k),Q-1)=1$.
\end{thm}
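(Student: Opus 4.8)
The plan is to apply Lemma~\ref{lem} with $q=Q^2$, $d=Q-1$, $r=n+k(Q+1)$, and $h(y)=(\delta y-\beta\delta^Q)^n-\delta(y-\beta)^n$, so that $f(x)=x^r h(x^d)$. Since $(q-1)/d=Q+1$, the lemma reduces the problem to two conditions: that $\gcd(r,Q-1)=1$, and that $\psi(x):=x^r h(x)^{Q-1}$ permutes $\mu_{Q+1}$. The first condition simplifies at once: reducing modulo $Q-1$ gives $Q+1\equiv 2$, hence $r\equiv n+2k$, so $\gcd(r,Q-1)=\gcd(n+2k,Q-1)$.

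The heart of the argument is to show that on $\mu_{Q+1}$ the map $\psi$ is a R\'edei-type function, i.e.\ conjugate to a power map by M\"obius transformations. First I would factor $h(x)=(x-\beta)^n(w^n-\delta)$, where $w:=(\delta x-\beta\delta^Q)/(x-\beta)$. Applying the Frobenius $a\mapsto a^Q$ and using $x^Q=x^{-1}$, $\beta^Q=\beta^{-1}$ for $x\in\mu_{Q+1}$, a short computation gives $h(x)^Q=\frac{(-1)^n}{\beta^n x^n}(x-\beta)^n(w^n-\delta^Q)$, once one verifies the crucial identity $w^Q=w$. Dividing and using $x^r=x^n$ on $\mu_{Q+1}$ yields
\[
\psi(x)=\frac{(-1)^n}{\beta^n}\cdot\frac{w^n-\delta^Q}{w^n-\delta}.
\]
Thus on $\mu_{Q+1}$ we have $\psi=M_2\circ p_n\circ M_1$, where $M_1(x)=w$ and $M_2(t)=\frac{(-1)^n}{\beta^n}\cdot\frac{t-\delta^Q}{t-\delta}$ are M\"obius transformations of $\bP^1$ and $p_n$ denotes the $n$-th power map.

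Next I would identify the sets involved. The identity $w^Q=w$ shows that $M_1$ sends $\mu_{Q+1}$ into $\bP^1(\F_Q)=\F_Q\cup\{\infty\}$; since $M_1$ is invertible (its determinant $\beta(\delta^Q-\delta)$ is nonzero because $\delta\notin\F_Q$) and both sets have $Q+1$ elements, $M_1$ restricts to a bijection $\mu_{Q+1}\to\bP^1(\F_Q)$. Dually, a norm computation shows $M_2(t)^{Q+1}=1$ for every $t\in\bP^1(\F_Q)$ (using $\beta^{Q+1}=1$ and $(-1)^{n(Q+1)}=1$), so $M_2$ restricts to a bijection $\bP^1(\F_Q)\to\mu_{Q+1}$. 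Since $p_n$ fixes $0$ and $\infty$ and permutes the cyclic group $\F_Q^*$ exactly when $\gcd(n,Q-1)=1$, it permutes $\bP^1(\F_Q)$ under precisely that condition. Conjugating by the bijections $M_1,M_2$, I conclude that $\psi$ permutes $\mu_{Q+1}$ if and only if $\gcd(n,Q-1)=1$.

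Combining the two conditions, $f$ permutes $\F_{Q^2}$ if and only if $\gcd(n+2k,Q-1)=\gcd(n,Q-1)=1$, which is equivalent to $\gcd\bigl(n(n+2k),Q-1\bigr)=1$, as claimed. The main obstacle, and the one genuinely creative step, is recognizing the R\'edei structure: finding the factorization $h(x)=(x-\beta)^n(w^n-\delta)$ and verifying $w^Q=w$, which is what forces $M_1(\mu_{Q+1})=\bP^1(\F_Q)$ and collapses $\psi$ to a power map over $\F_Q$. The remaining care is bookkeeping at the degenerate points (notably $x=\beta$, where $w=\infty$, and the images of $0,\infty$ under $M_2$), which one checks never leave $\mu_{Q+1}$, so that $h$ has no zeros on $\mu_{Q+1}$ and the displayed identity is valid throughout.
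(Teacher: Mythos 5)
Your proposal is correct and takes essentially the same route as the paper: reduce via Lemma~\ref{lem} to the induced map on $\mu_{Q+1}$, recognize it as a conjugate $M_2\circ x^n\circ M_1$ of the power map, and use that $M_1(x)=(\delta x-\beta\delta^Q)/(x-\beta)$ carries $\mu_{Q+1}$ bijectively onto $\F_Q\cup\{\infty\}$ (which also forces $h$ to be nonvanishing on $\mu_{Q+1}$, since $w^n\in\F_Q\cup\{\infty\}$ while $\delta\notin\F_Q$), yielding the condition $\gcd(n,Q-1)=1$. The only differences are cosmetic: you verify $M_1(\mu_{Q+1})=\F_Q\cup\{\infty\}$ directly from $w^Q=w$ plus a counting argument rather than invoking the classification in Lemma~\ref{q+1toq}, you keep the constant $(-1)^n/\beta^n$ inside $M_2$ and check $M_2(t)^{Q+1}=1$ directly (which lets you skip the paper's parity remark that $(-1)^n=-1$ under the standing hypothesis), and in the notation of Lemma~\ref{lem} your parameter should be $d=Q+1$ with $(q-1)/d=Q-1$, a harmless mislabeling since you state the resulting two conditions correctly.
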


The following corollary illustrates these results in the special case $n=3$, for certain values of $\beta,\gamma,\delta$.

\begin{cor} \label{main}
Let $Q$ be a prime power, and let $k$ be a nonnegative integer.  The polynomial
$g(x):=x^{k(Q+1)+3} + 3x^{k(Q+1)+Q+2} - x^{k(Q+1)+3Q}$ permutes\/
$\F_{Q^2}$ if and only if $\gcd(2k+3,Q-1)=1$ and $3\nmid Q$.
\end{cor}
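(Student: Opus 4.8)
The plan is to recognize $g$ as a nonzero scalar multiple of the polynomial $f$ produced by Theorem~\ref{thmb} or Theorem~\ref{thma} in the case $n=3$, for a suitable choice of parameters. First I would factor out the common power of $x$ and substitute $y=x^{Q-1}$, writing
\[
g(x)=x^{k(Q+1)+3}\bigl(1+3x^{Q-1}-x^{3(Q-1)}\bigr),
\]
so that the bracketed factor is $1+3y-y^3$ evaluated at $y=x^{Q-1}$. Since both theorems also produce the factor $x^{n+k(Q+1)}=x^{3+k(Q+1)}$ times a bracket that is a cubic in $x^{Q-1}$, the task reduces to choosing the parameters $\beta,\gamma$ (resp.\ $\beta,\delta$) so that the theorem's bracket becomes a nonzero constant multiple of $1+3y-y^3$.

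The key observation is that a \emph{single} choice does this: let $\omega$ be a primitive cube root of unity. Such an $\omega$ exists in $\F_{Q^2}$ precisely when $3\nmid Q$, and it lies in $\F_Q$ exactly when $Q\equiv 1\pmod 3$. I would verify the identity
\[
(\omega y-1)^3-\omega\,(y-\omega)^3=(\omega-1)\bigl(1+3y-y^3\bigr),
\]
which follows by expanding and using $\omega^3=1$ together with $\omega^2+\omega+1=0$. When $Q\equiv 1\pmod 3$ I apply Theorem~\ref{thmb} with $\gamma=\omega$ and $\beta=1$ (here $\gamma^Q=\omega$, while $\gamma^{Q+1}=\omega^2\ne 1$), and when $Q\equiv 2\pmod 3$ I apply Theorem~\ref{thma} with $\delta=\omega$ and $\beta=\omega$ (here $\delta\notin\F_Q$, $\beta^{Q+1}=\omega^{Q+1}=1$ as $3\mid Q+1$, and $\beta\delta^Q=\omega^{Q+1}=1$). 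In each case the theorem's bracket is exactly $(\omega y-1)^3-\omega(y-\omega)^3$, so the identity gives $f=(\omega-1)\,g$; since $\omega\ne 1$ the scalar $\omega-1$ is nonzero, whence $g$ permutes $\F_{Q^2}$ if and only if $f$ does.

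It then remains to read off the conditions. For $Q\equiv 1\pmod 3$, Theorem~\ref{thmb} requires $\gcd(3+2k,Q-1)=1$ and $\gcd(3,Q+1)=1$, and the second condition is automatic since $Q+1\equiv 2\pmod 3$. For $Q\equiv 2\pmod 3$, Theorem~\ref{thma} requires $\gcd\bigl(3(3+2k),Q-1\bigr)=1$; as $Q-1\equiv 1\pmod 3$ forces $\gcd(3,Q-1)=1$, this reduces to $\gcd(3+2k,Q-1)=1$. Thus whenever $3\nmid Q$, the polynomial $g$ permutes $\F_{Q^2}$ if and only if $\gcd(2k+3,Q-1)=1$. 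Finally I dispose of the case $3\mid Q$ directly: in characteristic $3$ the bracket evaluated at any $\alpha\in\F_Q^*$ equals $1+3-1=0$, and $g(0)=0$, so $g$ maps all $Q\ge 3$ elements of $\F_Q$ to $0$ and is not injective. Combining the three cases yields the stated equivalence.

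The main obstacle is essentially bookkeeping rather than a deep difficulty: one must notice that the primitive cube root of unity simultaneously trivializes the awkward $\beta\delta^Q$ term of Theorem~\ref{thma} and the $\gamma^Q\beta$ term of Theorem~\ref{thmb}, so that the two superficially different brackets both collapse to $(\omega y-1)^3-\omega(y-\omega)^3$. Once this identity is in hand, translating the gcd hypotheses through the residue of $Q$ modulo $3$ is routine.
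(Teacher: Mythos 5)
Your proposal is correct and matches the paper's proof essentially step for step: the paper also splits into the three residues of $Q$ modulo $3$, applies Theorem~\ref{thmb} with $\beta=1$ and $\gamma$ a primitive cube root of unity when $Q\equiv 1\pmod 3$, applies Theorem~\ref{thma} with $\beta=\delta$ a primitive cube root of unity when $Q\equiv 2\pmod 3$ (yielding the same scalar factor $\omega-1$), and disposes of $3\mid Q$ via $g(1)=0=g(0)$. Your write-up simply makes explicit the polynomial identity and the gcd bookkeeping that the paper leaves implicit.
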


Specializing even further to the values $k=Q-3$, $k=1$, and $k=0$ yields the following consequence.

\begin{cor} \label{maincor}
Let $Q$ be a prime power with $3\nmid Q$.  Then
\begin{enumerate}
\item $x^Q+3x^{2Q-1}-x^{Q^2-Q+1}$ is a permutation polynomial over\/ $\F_{Q^2}$.
\item $x^{Q+4}+3x^{2Q+3}-x^{4Q+1}$ is a permutation polynomial over\/ $\F_{Q^2}$ if and only if
$Q\not\equiv 1\pmod{5}$.
\item $x^3+3x^{Q+2}-x^{3Q}$ is a permutation polynomial over\/ $\F_{Q^2}$ if and only if
$Q\equiv 2\pmod{3}$.
\end{enumerate}
\end{cor}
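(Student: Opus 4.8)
The plan is to deduce Corollary~\ref{maincor} from Corollary~\ref{main} by specializing the parameter $k$: parts~(1), (2), (3) are the cases $k=Q-3$, $k=1$, $k=0$, respectively. Thus almost all of the work is the arithmetic of simplifying the two conditions $\gcd(2k+3,Q-1)=1$ and $3\nmid Q$ for each of these values of $k$, together with checking that the displayed trinomials are the ones produced by $g$.

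Before specializing I would note that, as a function on $\F_{Q^2}$, the polynomial $g$ depends on $k$ only through $k(Q+1)\bmod(Q^2-1)$ (since $x^{Q^2}=x$ lets one reduce exponents modulo $Q^2-1=(Q-1)(Q+1)$), and this in turn depends only on $k\bmod(Q-1)$; the same is true of the condition $\gcd(2k+3,Q-1)=1$. This permits me to choose any convenient nonnegative representative for $k$ and to dispose of the few degenerate small cases (notably $Q=2$, where $k=Q-3<0$) by a direct check.

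For $k=0$ the polynomial is literally $x^{3}+3x^{Q+2}-x^{3Q}$, and the condition is $\gcd(3,Q-1)=1$; under the standing hypothesis $3\nmid Q$ this is equivalent to $Q\not\equiv1\pmod3$, i.e.\ $Q\equiv2\pmod3$, which is part~(3). For $k=1$ the polynomial is $x^{Q+4}+3x^{2Q+3}-x^{4Q+1}$ and the condition becomes $\gcd(5,Q-1)=1$, i.e.\ $Q\not\equiv1\pmod5$, which is part~(2). For $k=Q-3$ the crucial simplification is $\gcd(2k+3,Q-1)=\gcd(2Q-3,Q-1)=\gcd(-1,Q-1)=1$, which holds for every $Q$; hence, under the hypothesis $3\nmid Q$, the associated $g$ is unconditionally a permutation polynomial, which is what part~(1) asserts.

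The step I expect to demand the most care is matching the trinomial displayed in part~(1) with the function induced by $g$ at $k=Q-3$. Here the exponents of $g$ are $Q^2-2Q$, $Q^2-Q-1$, and $Q^2+Q-3\equiv Q-2\pmod{Q^2-1}$, so the displayed polynomial does \emph{not} arise by naive substitution. Instead I would invoke the substitution $x\mapsto x^{-1}$, a bijection of $\F_{Q^2}$ fixing $0$ that therefore preserves the permutation property and replaces each exponent $e$ by $Q^2-1-e$; this sends $\{Q^2-2Q,\ Q^2-Q-1,\ Q-2\}$ to $\{2Q-1,\ Q,\ Q^2-Q+1\}$, exactly the exponents $Q$, $2Q-1$, $Q^2-Q+1$ occurring in part~(1). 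The main obstacle is then the bookkeeping of coefficients: one must track how the coefficients $1,3,-1$ are redistributed among these exponents under reduction modulo $Q^2-1$ and under the reciprocal substitution, and confirm that the outcome is precisely the trinomial stated. A direct verification for a couple of small values (say $Q\in\{4,5\}$) would simultaneously fix the normalization and settle the degenerate cases.
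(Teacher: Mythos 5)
Your plan is the same as the paper's proof: parts (2) and (3) are the cases $k=1$ and $k=0$ of Corollary~\ref{main}, and part (1) is the case $k=Q-3$, where $\gcd(2Q-3,Q-1)=\gcd(-1,Q-1)=1$ always holds, followed by the substitution $x\mapsto x^{Q^2-2}$ and reduction mod $x^{Q^2}-x$; your exponent images $\{Q^2-2Q,\ Q^2-Q-1,\ Q^2+Q-3\}\mapsto\{2Q-1,\ Q,\ Q^2-Q+1\}$ agree with the paper's. Your separate treatment of $Q=2$, where $k=Q-3<0$ falls outside the hypotheses of Corollary~\ref{main}, is a point the paper passes over silently, and is a genuine improvement.

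The one step you deferred --- the coefficient bookkeeping --- is exactly where a surprise awaits, so let me carry it out. The exponent $Q^2-2Q$ (coefficient $1$) maps to $2Q-1$, the exponent $Q^2-Q-1$ (coefficient $3$) maps to $Q$, and $Q^2+Q-3$ (coefficient $-1$) maps to $Q^2-Q+1$; hence the reduction is $x^{2Q-1}+3x^Q-x^{Q^2-Q+1}$, which is what the paper's own proof asserts, but it is \emph{not} the trinomial printed in item (1), namely $x^Q+3x^{2Q-1}-x^{Q^2-Q+1}$: the coefficients $1$ and $3$ sit on the opposite exponents. In odd characteristic this difference is real. For $Q=5$, write the printed trinomial as $x^Q h(x^{Q-1})$ with $h(y)=1+3y-y^{Q-1}$; since $\gcd(Q,Q-1)=1$, Lemma~\ref{lem} says it permutes $\F_{25}$ if and only if $y^Q h(y)^{Q-1}$ permutes $\mu_6$, and with $\omega$ a primitive cube root of unity one computes that both $-\omega$ and $\omega^2$ map to $-\omega$, so the printed trinomial is not a permutation polynomial of $\F_{25}$, whereas the trinomial $x^{2Q-1}+3x^Q-x^{Q^2-Q+1}$ produced by the reduction is. Thus your plan, once completed, proves the corrected statement, and the discrepancy is a transposition typo in the paper's statement of item (1), invisible in characteristic $2$ (where $3=1=-1$), which is the case of the Tu--Zeng--Hu--Li conjecture. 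One caution about your proposed sanity checks: $Q=4$ has characteristic $2$, where the coefficients collapse, so it cannot detect this issue; use an odd value such as $Q=5$.
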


In case $Q=2^{2m+1}$, the first two parts of this corollary were conjectured by
Tu, Zeng, Hu and Li \cite{TZHL}.  Conversely, these conjectures were the impetus which led to the
present paper.

The proofs of our results rely on exhibiting certain permutations of the set of
$(Q+1)$-th roots of unity in $\F_{Q^2}$.  The permutations we exhibit are represented by
\emph{R\'edei functions}, namely, rational functions over a field which are conjugate to $x^n$
over an extension field.  For further results about such functions, see for instance
\cite{Carlitz,Nobauer,Redei} and \cite[Ch.\ 5]{GMS}.  Although permutation polynomials on
subgroups of $\F_q^*$ have also been studied \cite{Brison}, the present paper is the first to
examine R\'edei permutations of any such subgroup which is not itself the multiplicative group of
a subfield.

We prove Theorems~\ref{thmb} and \ref{thma} in the next two sections, and deduce the corollaries
in Section~4.
We conclude this paper by using our
approach to give a very simple proof of a substantial generalization of the main result of
\cite{TZHL}.


\section{Proof of Theorem~\ref{thmb}}

In this section we prove Theorem~\ref{thmb}.  Throughout this section, $Q$ is a prime power, and
we write $\mu_d$ for the set of $d$-th roots of unity in the algebraic closure $\bar\F_Q$ of $\F_Q$.  We begin with a lemma
exhibiting the permutations of $\mu_{Q+1}$ induced by degree-one rational functions.

\begin{lemma} \label{q+1toq+1} Let $Q$ be a prime power, and let $\ell(x)\in\bar\F_Q(x)$ be a degree-one rational function.
Then $\ell$ induces a bijection on $\mu_{Q+1}$ if and only if $\ell(x)$ equals either
\begin{itemize}
\item $\beta/x$ with $\beta\in\mu_{Q+1}$, or
\item $(x-\gamma^Q\beta)/(\gamma x-\beta)$ with $\beta\in\mu_{Q+1}$ and $\gamma\in\F_{Q^2}\setminus\mu_{Q+1}$.
\end{itemize}
\end{lemma}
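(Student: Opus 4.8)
Throughout I use the elementary characterization that a nonzero $\zeta$ lies in $\mu_{Q+1}$ precisely when $\zeta^Q=\zeta^{-1}$; geometrically, $\mu_{Q+1}$ is the fixed locus in $\bP^1$ of the semilinear map $x\mapsto x^{-Q}$, the natural analogue of complex conjugation for the ``unit circle'' $\mu_{Q+1}$. The two directions are handled separately, and the forward (``only if'') direction rests on a single device: converting the set-theoretic condition that $\ell$ preserves $\mu_{Q+1}$ into an \emph{identity} of rational functions, after which everything reduces to solving a small linear system.

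For the ``if'' direction I would simply verify by hand that each listed function maps $\mu_{Q+1}$ into itself; since a degree-one map is injective and $\mu_{Q+1}$ is finite, mapping in forces a bijection, once I also check that the pole avoids $\mu_{Q+1}$. For $\ell(x)=\beta/x$ this is immediate from $\ell(\zeta)^{Q+1}=\beta^{Q+1}\zeta^{-(Q+1)}=1$. For $\ell(x)=(x-\gamma^Q\beta)/(\gamma x-\beta)$ I would substitute $\zeta^Q=\zeta^{-1}$, $\beta^Q=\beta^{-1}$, and $\gamma^{Q^2}=\gamma$ and simplify $\ell(\zeta)^Q$ down to $\ell(\zeta)^{-1}$, so that $\ell(\zeta)\in\mu_{Q+1}$; the hypothesis $\gamma\notin\mu_{Q+1}$ is exactly what guarantees that the pole $\beta/\gamma$ and the zero $\gamma^Q\beta$ lie outside $\mu_{Q+1}$.

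The substance is the converse. Write $\ell(x)=(ax+b)/(cx+d)$ with $ad-bc\neq0$. If $\ell$ bijects $\mu_{Q+1}$, then for every $\zeta\in\mu_{Q+1}$ we have $\ell(\zeta)\in\mu_{Q+1}$, so $\ell(\zeta)^Q=\ell(\zeta)^{-1}$; in particular $\ell(\zeta)$ is neither $0$ nor $\infty$, so no sample point is a pole of what follows. Using Frobenius additivity, $\ell(\zeta)^Q=\ell^{(Q)}(\zeta^Q)=\ell^{(Q)}(\zeta^{-1})$, where $\ell^{(Q)}$ is $\ell$ with its coefficients raised to the $Q$-th power. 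Hence the two degree-one rational functions $\ell^{(Q)}(1/x)$ and $1/\ell(x)$ agree at all $Q+1\ge 3$ points of $\mu_{Q+1}$, and therefore coincide identically. This passage from a finite condition to a functional identity is the crux; I expect it, rather than any later computation, to be the main obstacle to get cleanly, since it is what forces the analysis to be rigid.

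Once the identity is in hand, comparing the two functions as elements of $\mathrm{PGL}_2$ (both coefficient matrices are invertible because $ad-bc\neq0$) yields a proportionality, i.e. the four relations $b^Q=\lambda c$, $a^Q=\lambda d$, $d^Q=\lambda a$, $c^Q=\lambda b$ for some $\lambda\neq0$. I would finish by splitting on whether $a$ and $c$ vanish. The case $a=0$ forces $d=0$ and gives $\ell(x)=\beta/x$; the case $c=0$ forces $b=0$ and gives $\ell(x)=\mu x$ with $\mu\in\mu_{Q+1}$, which is the second family with $\gamma=0$; and the generic case $a,c\neq0$ makes all four coefficients nonzero, whereupon setting $\gamma:=c/a$ and $\beta:=-d/a$ recovers the second family. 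In every case the relations yield $\beta^Q=\beta^{-1}$, so $\beta\in\mu_{Q+1}$; in the generic case they also give $\gamma^{Q^2}=\gamma$, so $\gamma\in\F_{Q^2}$, and the factorization $ad-bc=(d/a^Q)(a^{Q+1}-c^{Q+1})$ shows $\gamma\notin\mu_{Q+1}$ exactly because $ad-bc\neq0$. The remaining difficulty here is only bookkeeping: controlling the scaling freedom by working with the invariants $c/a$ and $d/a$ rather than the individual coefficients, and confirming that the field-of-definition constraint $\gamma\in\F_{Q^2}$ genuinely falls out of the relations.
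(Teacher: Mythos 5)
Your proof is correct, but it travels a genuinely different route from the paper's. The paper first pins down $\ell\in\F_{Q^2}(x)$ by the three-point field-of-definition argument, then converts the bijection condition into the divisibility of the numerator of $\ell(x)^{Q+1}-1$ by $x^{Q+1}-1$; expanding $(\alpha x-\delta)^{Q+1}-(\gamma x-\beta)^{Q+1}$ via Frobenius yields exactly two coefficient equations, $\alpha^Q\delta=\gamma^Q\beta$ and $\alpha^{Q+1}+\delta^{Q+1}=\gamma^{Q+1}+\beta^{Q+1}$, so one computation settles both directions of the equivalence with a two-way split on $\alpha=0$. You instead encode membership in $\mu_{Q+1}$ by $y^Q=y^{-1}$ and upgrade the pointwise condition to the functional identity $\ell^{(Q)}(1/x)=1/\ell(x)$ --- using the same rigidity fact (degree-one maps agreeing at $Q+1\ge 3$ points coincide) that the paper uses, but deployed on a different pair of functions --- and then read off four proportionality relations $b^Q=\lambda c$, $a^Q=\lambda d$, $d^Q=\lambda a$, $c^Q=\lambda b$ by comparing matrices in $\mathrm{PGL}_2$, finishing with a three-way case split. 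I verified the steps you left compressed: in the generic case the relations do force $b/a=-\gamma^Q\beta$ with $\gamma=c/a$ and $\beta=-d/a$, so the numerator matches the stated form; your factorization $ad-bc=(d/a^Q)(a^{Q+1}-c^{Q+1})$ is valid under the relations and correctly gives $\gamma\notin\mu_{Q+1}$; and your $c=0$ case $\ell(x)=\mu x$ with $\mu\in\mu_{Q+1}$ is indeed the second family with $\gamma=0$, taking $\beta=-\mu^{-1}\in\mu_{Q+1}$ since $(-1)^{Q+1}=1$ in $\F_Q$ in every characteristic. What your approach buys: the identity $\ell^{(Q)}(1/x)=1/\ell(x)$ conceptually identifies the $\mu_{Q+1}$-preserving degree-one maps as those commuting with the conjugation $x\mapsto x^{-Q}$ (a unitary-group description of the ``unit circle''), it avoids expanding $(Q+1)$-th powers, and the constraint $\gamma\in\F_{Q^2}$ falls out of the relations via $\gamma^{Q^2}=\gamma$ rather than requiring the a priori field-of-definition step. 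What the paper's divisibility computation buys: the ``if'' and ``only if'' directions are dispatched simultaneously (you need a separate substitution check for sufficiency), with only two equations, no auxiliary scalar $\lambda$, and one fewer case.
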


\begin{proof}
If $\ell(x)$ induces a bijection on $\mu_{Q+1}$, then it maps at least $Q+1\ge 3$ elements of $\F_{Q^2}$ into $\F_{Q^2}$;
since a degree-one rational function can be defined over any field which contains three points and their images, 
it follows that $\ell(x)$ is in $\F_{Q^2}(x)$.
Thus, we may assume that $\ell(x)=(\alpha x-\delta)/(\gamma x-\beta)$ with $\alpha,\beta,\gamma,\delta\in\F_{Q^2}$ and $\alpha\beta\ne \gamma\delta$.
If $\ell(x)$ permutes $\mu_{Q+1}$ then in particular $\mu_{Q+1}$ does not contain $\ell^{-1}(\infty)=\beta/\gamma$.
Henceforth assume that $\mu_{Q+1}$ does not contain $\beta/\gamma$.  Then $\ell(x)$ induces a bijection on $\mu_{Q+1}$
if and only if the numerator of $\ell(x)^{Q+1}-1$ is divisible by $x^{Q+1}-1$.  This numerator is
$(\alpha x-\delta)^{Q+1}-(\gamma x-\beta)^{Q+1}$, which equals
\[
(\alpha^{Q+1}-\gamma^{Q+1})x^{Q+1}-(\alpha^Q\delta-\gamma^Q\beta)x^Q-
(\alpha\delta^Q-\gamma\beta^Q)x+\delta^{Q+1}-\beta^{Q+1}.
\]
This polynomial is divisible by $x^{Q+1}-1$ if and only if it equals $(\alpha^{Q+1}-\gamma^{Q+1})(x^{Q+1}-1)$,
or equivalently
\[
\alpha^Q\delta =\gamma^Q\beta  \quad\text{ and }\quad \alpha^{Q+1}+\delta^{Q+1}=\gamma^{Q+1}+\beta^{Q+1}.
\]
If $\alpha =0$ then, since $\alpha^Q\delta =\gamma^Q\beta $ but $\alpha \beta \ne \gamma\delta$, we must have $\beta =0$, and then $\delta^{Q+1}=\gamma^{Q+1}$, whence
$f(x)=-\delta /(\gamma x)$ where $(-\delta /\gamma)^{Q+1}=1$.  Now suppose $\alpha \ne 0$, so that upon dividing $\alpha,\delta,\gamma,\beta$ by $\alpha$ we may assume that
$\alpha =1$.  Then $\delta =\gamma^Q\beta$ and $\gamma^{Q+1}+\beta^{Q+1}=\alpha^{Q+1}+\delta^{Q+1}=1+\gamma^{Q+1}\beta^{Q+1}$, so that $(\gamma^{Q+1}-1)(\beta^{Q+1}-1)=0$
and thus either $\gamma^{Q+1}=1$ or $\beta^{Q+1}=1$.  But $0\ne \alpha \beta -\gamma\delta =\beta (1-\gamma^{Q+1})$ implies that $\gamma^{Q+1}\ne 1$, so $\beta^{Q+1}=1$.
Finally, in this case the condition $\beta /\gamma \notin\mu_{Q+1}$ asserts that $\gamma \notin\mu_{Q+1}$.
\end{proof}

The next lemma was first proved in \cite{TZ}.

\begin{lemma} \label{lem}
Pick $h\in\F_q[x]$ and integers $d,r>0$ such that $d\mid (q-1)$.  Then $f(x):=x^r h(x^{(q-1)/d})$
permutes\/ $\F_q$ if and only if both
\begin{enumerate}
\item $\gcd(r,(q-1)/d)=1$ \,\text{ and}
\item $x^r h(x)^{(q-1)/d}$ permutes $\mu_d$.
\end{enumerate}
\end{lemma}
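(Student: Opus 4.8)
The plan is to exploit the $s$-th power map on $\F_q^*$, where I abbreviate $s := (q-1)/d$. Since $s \mid (q-1)$, the map $\pi \colon \F_q^* \to \F_q^*$ given by $\pi(\alpha) = \alpha^s$ is a group homomorphism whose image is the unique subgroup of order $(q-1)/s = d$, namely $\mu_d$, and whose kernel is $\mu_s$, the group of $s$-th roots of unity, of order $s$. Writing $g(x) := x^r h(x)^s$ for the function in condition~(2) (which equals the $x^r h(x)^{(q-1)/d}$ of the statement), a one-line computation gives the intertwining relation $\pi(f(\alpha)) = (\alpha^s)^r h(\alpha^s)^s = g(\pi(\alpha))$, valid for every $\alpha \in \F_q^*$ with $h(\alpha^s) \ne 0$. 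Since $r>0$ we have $f(0)=0$, so $f$ permutes $\F_q$ if and only if it permutes $\F_q^*$; the entire argument therefore takes place on $\F_q^*$ and its quotient $\mu_d$ under $\pi$.

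For the ``if'' direction I would assume (1) and (2) and prove that $f$ is injective on $\F_q^*$. Because $g$ permutes $\mu_d$ it takes values in $\mu_d$, which forces $h(\zeta)\ne 0$ for every $\zeta\in\mu_d$; hence $f$ never vanishes on $\F_q^*$ and the intertwining relation holds throughout. Now suppose $f(\alpha)=f(\alpha')$. Applying $\pi$ and using injectivity of $g$ gives $\alpha^s=\alpha'^s$, so $\alpha'=\alpha\eta$ for some $\eta$ with $\eta^s=1$. Cancelling the common nonzero factor $h(\alpha^s)$ from $f(\alpha)=f(\alpha')$ leaves $\alpha^r=\alpha^r\eta^r$, hence $\eta^r=1$; combined with $\eta^s=1$, a Bézout argument and $\gcd(r,s)=1$ force $\eta=\eta^{\gcd(r,s)}=1$, so $\alpha=\alpha'$.

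For the ``only if'' direction I would assume $f$ permutes $\F_q^*$ and read off (1) and (2) from the same relation. Since $f$ is a bijection that omits $0$, $h$ cannot vanish on $\mu_d$, so $g$ maps $\mu_d$ into itself; surjectivity of $g$ then follows because $g\circ\pi=\pi\circ f$ is a composition of surjections while $\pi$ is onto $\mu_d$, and a surjection of the finite set $\mu_d$ is a bijection, giving (2). For (1), if $\gcd(r,s)>1$ then the cyclic group $\mu_s\subseteq\F_q^*$ contains an element $\eta\ne 1$ with $\eta^r=1$; for any $\alpha$ the element $\alpha'=\alpha\eta$ then satisfies $\alpha'^s=\alpha^s$ and $\alpha'^r=\alpha^r$, whence $f(\alpha')=f(\alpha)$ with $\alpha'\ne\alpha$, contradicting injectivity. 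Thus $\gcd(r,s)=1$.

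The computations are entirely routine; the only point requiring care is the bookkeeping with the fibers of $\pi$, which are precisely the cosets of $\mu_s$. The crux is recognizing that $\gcd(r,s)=1$ is exactly the condition making $\alpha\mapsto\alpha^r$ injective on each such fiber, so that distinctness is preserved when passing between $\F_q^*$ and $\mu_d$. Once the relation $\pi\circ f=g\circ\pi$ is in hand, both implications fall out by chasing injectivity and surjectivity through it.
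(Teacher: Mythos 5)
Your proof is correct and takes essentially the same approach as the paper: both rest on the intertwining identity $f(x)^s = g(x^s)$ for the $s$-th power map (with $s=(q-1)/d$), whose fibers on $\F_q^*$ are the cosets of $\mu_s$, together with the observation that $\gcd(r,s)=1$ is exactly what makes $x\mapsto x^r$ injective on those cosets. The only difference is bookkeeping: the paper phrases the reduction via value sets (the values of $f$ are all $s$-th roots of the values of $g$ on $\mu_d$, so surjectivity of $f$ matches surjectivity of $g$), whereas you chase injectivity fiber by fiber --- interchangeable arguments on a finite set.
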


This lemma has been used in several investigations of permutation polynomials, for instance see
\cite{MZ,TZ,Z1,Z2,Z3}.
Since the proof of Lemma~\ref{lem} is short, we include it here for the reader's convenience.

\begin{proof}
Write $s:=(q-1)/d$.  For $\zeta\in\mu_s$, we have $f(\zeta x)=\zeta^r f(x)$.  Thus, if $f$
permutes $\F_q$ then $\gcd(r,s)=1$.  Conversely, if $\gcd(r,s)=1$ then the values of $f$ on
$\F_q$ consist of all the $s$-th roots of the values of
\[
f(x)^s=x^{rs}h(x^s)^s.
\]
But the values of $f(x)^s$ on $\F_q$ consist of $f(0)^s=0$ and the values of $g(x):=x^r h(x)^s$
on $(\F_q^*)^s$.  Thus, $f$ permutes $\F_q$ if and only if $g$ permutes $(\F_q^*)^s=\mu_d$.
\end{proof}

We now prove Theorem~\ref{thmb}.

\begin{proof}[Proof of Theorem~\ref{thmb}]
Write $h(x):=({\gamma}x-{\beta})^n-{\gamma}(x-{\gamma}^Q{\beta})^n$ and $r:=n+k(Q+1)$.  By Lemma~\ref{lem}, $f(x)=x^r h(x^{Q-1})$ permutes $\F_{Q^2}$ if and only if
$\gcd(n+k(Q+1),Q-1)=1$ and $g(x):=x^r h(x)^{Q-1}$ permutes $\mu_{Q+1}$.  Henceforth we assume
that $\gcd(n+k(Q+1),Q-1)=1$, or equivalently $\gcd(n+2k,Q-1)=1$; note that this implies $n$ is odd if $Q$ is odd, so that $(-1)^{n}=-1$ in $\F_Q$.

We begin by showing that $h(x)$ has no roots in $\mu_{Q+1}$.  For $\alpha\in\mu_{Q+1}$, if
$h(\alpha)=0$ then $\delta:=({\gamma}\alpha-{\beta})/(\alpha-{\gamma}^Q{\beta})$ satisfies $\delta^n={\gamma}$, so in particular
$\delta\notin\mu_{Q+1}$.  But we compute
\[
\delta^Q = \frac{{\gamma}^Q\alpha^{-1}-{\beta}^{-1}}{\alpha^{-1}-{\gamma}{\beta}^{-1}}
 = \frac{{\gamma}^Q{\beta}-\alpha}{{\beta}-{\gamma}\alpha} = \delta^{-1},
\]
which is impossible since $\delta\notin\mu_{Q+1}$.  Hence $h(x)$ has no roots in $\mu_{Q+1}$, so
$h(\mu_{Q+1})\subseteq\F_{Q^2}^*$, whence $g(\mu_{Q+1})\subseteq\mu_{Q+1}$.  Thus, $g$ permutes $\mu_{Q+1}$ if and only if $g$ is injective on $\mu_{Q+1}$.

Next, for $\alpha\in\mu_{Q+1}$ we compute
\[
h(\alpha)^Q = ({\gamma}^Q\alpha^{-1}-{\beta}^{-1})^n - {\gamma}^Q(\alpha^{-1}-{\gamma}{\beta}^{-1})^n
 = \frac{({\gamma}^Q{\beta}-\alpha)^n - {\gamma}^Q({\beta}-{\gamma}\alpha)^n}{({\beta}\alpha)^n},
\]
so that
\[
g(\alpha) = \alpha^{r-n} {\beta}^{-n} \frac{({\gamma}^Q{\beta}-\alpha)^n - {\gamma}^Q({\beta}-{\gamma}\alpha)^n}{({\gamma}\alpha-{\beta})^n-{\gamma}(\alpha-{\gamma}^Q{\beta})^n}.
\]
Since $r-n=k(Q+1)$ and $\alpha^{Q+1}=1$, it follows that $g$ is injective on $\mu_{Q+1}$ if and only if
\[
G(x):={\beta}\frac{({\gamma}^Q{\beta}-x)^n - {\gamma}^Q({\beta}-{\gamma}x)^n}{({\gamma}x-{\beta})^n-{\gamma}(x-{\gamma}^Q{\beta})^n}
\]
is injective on $\mu_{Q+1}$.  For $\ell(x):=(x-{\gamma}^Q{\beta})/({\gamma}x-{\beta})$, we have
\[
G = \ell^{-1}\circ x^n\circ\ell,
\]
so $G$ is injective on $\mu_{Q+1}$ if and only if $x^n$ is injective on $\ell(\mu_{Q+1})$.
By Lemma~\ref{q+1toq+1} we have $\ell(\mu_{Q+1})=\mu_{Q+1}$, so $x^n$ is injective on this
set if and only if $\gcd(n,Q+1)=1$.  This concludes the proof.
\end{proof}


\section{Proof of Theorem~\ref{thma}}

In this section we prove Theorem~\ref{thma}.  As in the previous section, $Q$ is a prime power, and
$\mu_d$ denotes the set of $d$-th roots of unity in $\bar\F_Q$.  We begin by determining the
bijections $\mu_{Q+1}\to\F_Q\cup\{\infty\}$ which are induced by degree-one rational functions.

\begin{lemma} \label{q+1toq} Let $Q$ be a prime power, and let $\ell\in\bar\F_Q(x)$ be a degree-one rational function.
Then $\ell(x)$ induces a bijection from $\mu_{Q+1}$ to $\F_Q\cup\{\infty\}$ if and only if $\ell(x)=({\delta}x-{\beta}{\delta}^Q)/(x-{\beta})$
with ${\beta}\in\mu_{Q+1}$ and ${\delta}\in\F_{Q^2}\setminus\F_Q$.
\end{lemma}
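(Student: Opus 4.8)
The plan is to mirror the structure of the proof of Lemma~\ref{q+1toq+1}. First I would argue that any degree-one rational function $\ell$ inducing a bijection $\mu_{Q+1}\to\F_Q\cup\{\infty\}$ must be defined over $\F_{Q^2}$: such an $\ell$ maps at least three points of $\F_{Q^2}$ (the elements of $\mu_{Q+1}$, of which there are $Q+1\ge 3$) to points of $\F_{Q^2}$ (namely $\F_Q\cup\{\infty\}\subseteq\F_{Q^2}\cup\{\infty\}$), and a degree-one rational function is determined over any field containing three points and their images. So I may write $\ell(x)=(\alpha x-\delta)/(\gamma x-\beta)$ with $\alpha,\beta,\gamma,\delta\in\F_{Q^2}$ and $\alpha\beta\ne\gamma\delta$.

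The key idea is to encode the target set $\F_Q\cup\{\infty\}$ by the Frobenius-invariance condition. The set $\F_Q\cup\{\infty\}$ is precisely the fixed-point set of the Frobenius map $y\mapsto y^Q$ on $\bP^1(\F_{Q^2})$, i.e.\ $\ell(\alpha)\in\F_Q\cup\{\infty\}$ exactly when $\ell(\alpha)^Q=\ell(\alpha)$. Meanwhile, for $\alpha\in\mu_{Q+1}$ we have $\alpha^Q=\alpha^{-1}$. So the natural approach is to compute the rational function $\ell(x)^Q$, which (applying Frobenius to the coefficients) is $(\alpha^Q x-\delta^Q)/(\gamma^Q x-\beta^Q)$, substitute $x\mapsto x^{-1}$ to build in membership in $\mu_{Q+1}$, and demand that the resulting function agree with $\ell(x)$ on all of $\mu_{Q+1}$. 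Concretely, $\ell$ sends $\mu_{Q+1}$ into $\F_Q\cup\{\infty\}$ iff the degree-one function $x\mapsto \ell(x^{-1})^Q$ equals $\ell(x)$ as functions on $\mu_{Q+1}$; since two degree-one functions agreeing on $Q+1\ge 3$ points are equal, this becomes an identity of rational functions. Writing $\ell(x^{-1})^Q=(\alpha^Q-\delta^Q x)/(\gamma^Q-\beta^Q x)=(\delta^Q x-\alpha^Q)/(\beta^Q x-\gamma^Q)$, I would equate this with $(\alpha x-\delta)/(\gamma x-\beta)$ up to a common scalar, yielding a small system of equations in $\alpha,\beta,\gamma,\delta$ and their Frobenius images. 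Solving that system, together with the nondegeneracy $\alpha\beta\ne\gamma\delta$, should force the claimed normal form $\ell(x)=(\delta x-\beta\delta^Q)/(x-\beta)$ with $\beta\in\mu_{Q+1}$ and $\delta\notin\F_Q$.

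To pin down the constants efficiently I would not solve the system blindly but read off structure: the stated answer has the pole at $x=\beta\in\mu_{Q+1}$ (which maps to $\infty$, as it should, since $\infty$ is the one point of the target not in $\F_Q$) and value $\delta$ at $x=0$; the condition $\delta\notin\F_Q$ is exactly what guarantees $\ell$ is genuinely degree one (numerator and denominator not proportional) and that the image is not a single point. I would first justify that the pole $\beta/\gamma$ of $\ell$ must lie in $\mu_{Q+1}$, since a bijection onto $\F_Q\cup\{\infty\}$ must hit $\infty$; after normalizing $\gamma=1$ this gives the pole at $\beta\in\mu_{Q+1}$. Plugging the Frobenius-invariance identity into the numerator then determines $\delta$ and forces $\alpha=\delta^Q\cdot(\text{unit})$, collapsing to the stated form. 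For the converse, I would verify directly that $(\delta x-\beta\delta^Q)/(x-\beta)$ with $\beta\in\mu_{Q+1}$, $\delta\notin\F_Q$ carries $\mu_{Q+1}$ bijectively onto $\F_Q\cup\{\infty\}$: the map is a bijection $\bP^1\to\bP^1$, it sends $\beta$ to $\infty$, and the Frobenius computation above (using $\alpha^Q=\alpha^{-1}$ on $\mu_{Q+1}$) shows the image lands in $\F_Q\cup\{\infty\}$; counting gives $|\mu_{Q+1}|=Q+1=|\F_Q\cup\{\infty\}|$, so injectivity yields surjectivity.

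The main obstacle is the bookkeeping in the coefficient-matching step: equating $\ell(x^{-1})^Q$ with $\ell(x)$ involves mixing an element with its $Q$-th power, and one must carefully track which quantities lie in $\mu_{Q+1}$, which in $\F_Q$, and extract the genuine normal form rather than a spurious degenerate solution (e.g.\ where numerator and denominator become proportional). The nondegeneracy condition $\alpha\beta\ne\gamma\delta$ and the requirement $\delta\notin\F_Q$ are precisely what rule out those degenerate cases, so the delicate part is showing the exceptional solutions are exactly the ones excluded by these hypotheses.
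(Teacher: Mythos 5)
Your proposal is correct, and while it opens exactly as the paper does (the three-point argument placing $\ell$ in $\F_{Q^2}(x)$, plus the two Frobenius facts that $\F_Q\cup\{\infty\}$ is the fixed locus of $y\mapsto y^Q$ on $\bP^1(\F_{Q^2})$ and that $u^Q=u^{-1}$ on $\mu_{Q+1}$), the mechanism you use to extract the coefficient conditions is genuinely different. The paper first forces the pole $\ell^{-1}(\infty)$ into $\mu_{Q+1}$, normalizes it to $\beta$, and converts the statement into a polynomial divisibility: the numerator of $\ell(x)^Q-\ell(x)$ must be divisible by $(x^{Q+1}-1)/(x-\beta)$; comparing coefficients mod $x^{Q+1}-1$ then settles the question by a degree count that only works for $Q>2$, so $Q=2$ needs a separate verification. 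You instead invoke projective rigidity: the degree-one maps $x\mapsto\ell^{(Q)}(x^{-1})$ (Frobenius on coefficients composed with inversion) and $\ell$ agree at the $Q+1\ge 3$ points of $\mu_{Q+1}$, hence coincide as rational functions, i.e.\ their matrices are proportional. This buys uniformity in $Q$ (no $Q=2$ exception) at the price of the scalar bookkeeping you flagged, which does close cleanly: with $\ell=(\alpha x-\delta)/(\gamma x-\beta)$ the proportionality reads $\delta^Q=\lambda\alpha$, $\alpha^Q=\lambda\delta$, $\beta^Q=\lambda\gamma$, $\gamma^Q=\lambda\beta$, which forces all four coefficients to be nonzero and, upon applying Frobenius once more, gives $\lambda^{Q+1}=1$; since $t\mapsto t^{Q-1}$ maps $\F_{Q^2}^*$ onto $\mu_{Q+1}$, rescaling all four coefficients by a suitable $t$ normalizes $\lambda=1$, so $\ell=(\alpha x-\alpha^Q)/(\gamma x-\gamma^Q)$ with $\gamma\ne 0$; dividing through by $\gamma$ and setting $\beta_0:=\gamma^{Q-1}\in\mu_{Q+1}$ and $\delta_0:=\alpha/\gamma$ yields exactly $(\delta_0 x-\beta_0\delta_0^Q)/(x-\beta_0)$, whose determinant $\beta_0(\delta_0^Q-\delta_0)$ is nonzero precisely when $\delta_0\notin\F_Q$, as you predicted. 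Your reduction of ``bijection'' to ``maps into,'' via injectivity of a degree-one map together with $|\mu_{Q+1}|=Q+1=|\F_Q\cup\{\infty\}|$, and your direct Frobenius verification of the converse are both sound and match what the paper's computation yields implicitly.
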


\begin{proof}
If $\ell$ maps $\mu_{Q+1}$ into $\F_Q\cup\{\infty\}$, then $\ell$ maps at least $Q+1\ge 3$ elements of $\F_{Q^2}$
into $\F_{Q^2}\cup\{\infty\}$, so $\ell\in\F_{Q^2}(x)$.  Thus we may write $\ell=({\delta}x-{\gamma})/({\alpha}x-{\beta})$ with ${\delta},{\gamma},{\alpha},{\beta}\in\F_{Q^2}$
and ${\delta}{\beta}\ne {\gamma}{\alpha}$.  Moreover, we may assume that $\ell^{-1}(\infty)={\beta}/{\alpha}$ is in $\mu_{Q+1}$, so that after suitably scaling ${\delta},{\gamma},{\alpha},{\beta}$ we
may assume that ${\alpha}=1$ and ${\beta}\in\mu_{Q+1}$.  Under these hypotheses, $\ell$ induces a bijection from $\mu_{Q+1}$ to
$\F_Q\cup\{\infty\}$ if and only if the numerator of $\ell(x)^Q-\ell(x)$ is divisible by $(x^{Q+1}-1)/(x-{\beta})$.  This numerator is
\begin{align*}
({\delta}^Qx^Q&-{\gamma}^Q)(x-{\beta}) - (x^Q-{\beta}^Q)({\delta}x-{\gamma}) \\
&= ({\delta}^Q-{\delta})x^{Q+1}+({\gamma}-{\beta}{\delta}^Q)x^Q+({\delta}{\beta}^Q-{\gamma}^Q)x + ({\gamma}^Q{\beta}-{\gamma}{\beta}^Q),
\end{align*}
which is congruent mod $x^{Q+1}-1$ to
\[
g(x):=({\gamma}-{\beta}{\delta}^Q)x^Q + ({\delta}{\beta}^Q-{\gamma}^Q)x + ({\delta}^Q-{\delta}+{\gamma}^Q{\beta}-{\gamma}{\beta}^Q).
\]
Since
\[
\frac{x^{Q+1}-1}{x-{\beta}} = \frac{x^{Q+1}-{\beta}^{Q+1}}{x-{\beta}} = \sum_{i=0}^Q x^i {\beta}^{Q-i},
\]
it follows that if $Q>2$ then $g(x)$ is divisible by $(x^{Q+1}-1)/(x-{\beta})$ if and only if $g(x)$ is the zero polynomial, or
equivalently
\[
{\gamma}={\beta}{\delta}^Q \quad\text{ and }\quad {\delta}+{\gamma}{\beta}^Q\in\F_Q.
\]
Since ${\beta}\in\mu_{Q+1}$, the second condition follows from the first, as ${\delta}+{\gamma}{\beta}^Q={\delta}+{\delta}^Q{\beta}^{Q+1}={\delta}+{\delta}^Q$ is in $\F_Q$.
If these conditions hold then ${\delta}{\beta}-{\gamma}={\beta}({\delta}-{\delta}^Q)$ is nonzero precisely when ${\delta}\notin\F_Q$.
Finally, one can easily check that the same conclusion holds when $Q=2$.
\end{proof}

We now prove Theorem~\ref{thma}.

\begin{proof}[Proof of Theorem~\ref{thma}]
Write $h(x):=({\delta}x-{\beta}{\delta}^Q)^n-{\delta}(x-{\beta})^n$ and $r:=n+k(Q+1)$.  By Lemma~\ref{lem}, $f(x)=x^r h(x^{Q-1})$ permutes $\F_{Q^2}$ if and only if
$\gcd(n+k(Q+1),Q-1)=1$ and $g(x):=x^r h(x)^{Q-1}$ permutes $\mu_{Q+1}$.  Henceforth we assume
that $\gcd(n+k(Q+1),Q-1)=1$, or equivalently $\gcd(n+2k,Q-1)=1$; note that this implies $n$ is odd if $Q$ is odd, so that $(-1)^{n}=-1$ in $\F_Q$.

We begin by showing that $h(x)$ has no roots in $\mu_{Q+1}$.  Our hypothesis ${\delta}\notin\F_Q$ implies that
$h({\beta})=(\delta\beta-\beta\delta^Q)^n={\beta}^n({\delta}-{\delta}^Q)^n\ne 0$.
For $\alpha\in\mu_{Q+1}\setminus\{{\beta}\}$, if $h(\alpha)=0$ then $\theta:=({\delta}\alpha-{\beta}{\delta}^Q)/(\alpha-{\beta})$
satisfies $\theta^n={\delta}$, so in particular $\theta\notin\F_Q$.  But we compute
\[
\theta^Q = \frac{{\delta}^Q\alpha^{-1}-{\beta}^{-1}{\delta}}{\alpha^{-1}-{\beta}^{-1}} = \frac{{\delta}^Q{\beta}-\alpha {\delta}}{{\beta}-\alpha} = \theta,
\]
which is a contradiction.  Hence $h(x)$ has no roots in $\mu_{Q+1}$, so $h(\mu_{Q+1})\subseteq\F_{Q^2}^*$,
whence $g(\mu_{Q+1})\subseteq\mu_{Q+1}$.  Thus, $g$ permutes $\mu_{Q+1}$ if and only if $g$ is injective on $\mu_{Q+1}$.

Next, for $\alpha\in\mu_{Q+1}$ we compute
\[
h(\alpha)^Q = ({\delta}^Q\alpha^{-1}-{\beta}^{-1}{\delta})^n - {\delta}^Q(\alpha^{-1}-{\beta}^{-1})^n
 = \frac{({\delta}^Q{\beta}-\alpha {\delta})^n - {\delta}^Q({\beta}-\alpha)^n}{(\alpha{\beta})^n},
\]
so that
\[
g(\alpha) = \alpha^{r-n} {\beta}^{-n} \frac{({\delta}^Q{\beta}-\alpha {\delta})^n - {\delta}^Q({\beta}-\alpha)^n}{({\delta}\alpha-{\beta}{\delta}^Q)^n-{\delta}(\alpha-{\beta})^n}.
\]
Since $r-n=k(Q+1)$ and $\alpha^{Q+1}=1$, it follows that $g$ is injective on $\mu_{Q+1}$ if and only if
\[
G(x):=-{\beta}\frac{({\delta}^Q{\beta}-x {\delta})^n - {\delta}^Q({\beta}-x)^n}{({\delta}x-{\beta}{\delta}^Q)^n-{\delta}(x-{\beta})^n}
\]
is injective on $\mu_{Q+1}$.  For $\ell(x):=({\delta}x-{\beta}{\delta}^Q)/(x-{\beta})$, we have
\[
G = \ell^{-1}\circ x^n\circ\ell.
\]
so $G$ is injective on $\mu_{Q+1}$ if and only if $x^n$ is injective on $\ell(\mu_{Q+1})$.
By Lemma~\ref{q+1toq} we have $\ell(\mu_{Q+1})=\F_Q\cup\{\infty\}$, so $x^n$ is injective on this
set if and only if $\gcd(n,Q-1)=1$.  This concludes the proof.
\end{proof}


\section{Proofs of Corollary~\ref{main} and Corollary~\ref{maincor}}

In this section we prove Corollary~\ref{main} and Corollary~\ref{maincor}.

\begin{proof}[Proof of Corollary~\ref{main}]
If $Q\equiv 0\pmod{3}$ then $g(1)=0=g(0)$ so $g(x)$ does not permute $\F_{Q^2}$.
If $Q\equiv 1\pmod{3}$ then put $n=3$ and $\beta=1$, and let $\gamma$ be a primitive cube root of unity in $\F_Q$.
In this case, Theorem~\ref{thmb} says that $(\gamma-1)g(x)$ permutes $\F_{Q^2}$ if and only if $\gcd(3+2k,Q-1)=1$.
Finally, if $Q\equiv 2\pmod{3}$ then put $n=3$ and $\beta=\delta$, where $\delta$ is a primitive cube root of unity in
$\F_{Q^2}$.  In this case, Theorem~\ref{thma} says that $(\delta-1)g(x)$ permutes $\F_{Q^2}$ if and only if
$\gcd(3+2k,Q-1)=1$.
\end{proof}

\begin{proof}[Proof of Corollary~\ref{maincor}]
Items (2) and (3)  follow at once from the cases $k=1$ and $k=0$ of Corollary~\ref{main}.
The case $k=Q-3$ of Corollary~\ref{main} asserts that
$g(x):=x^{Q^2-2Q}+3x^{Q^2-Q-1}-x^{Q^2+Q-3}$ is a permutation polynomial over $\F_{Q^2}$ if and
only if $\gcd(2Q-3,Q-1)=1$, which always holds.  Thus $g(x^{Q^2-2})$ is a permutation polynomial over
$\F_{Q^2}$, as is the reduction of $g(x^{Q^2-2})$ mod $x^{Q^2}-x$.  Since this reduction equals
$x^{2Q-1}+3x^Q-x^{Q^2-Q+1}$, item (1) of Corollary~\ref{maincor} follows.
\end{proof}


\section{The main result of \cite{TZHL}}

In this section we give a simple proof of a generalization of \cite[Thm.~1]{TZHL}.  Our
proof is completely different from the one in \cite{TZHL}.  Once again, $\mu_{Q+1}$ denotes
the set of $(Q+1)$-th roots of unity in $\bar\F_Q$.

\begin{thm} \label{genthm}
Let $Q$ be a prime power, let $r$ be a positive integer, and let $\beta$ be a $(Q+1)$-th root of unity
in\/ $\F_{Q^2}$.  Let $h(x)\in\F_{Q^2}[x]$ be a polynomial of degree $d$ such that $h(0)\ne 0$ and
\[
\left(x^d \cdot h(1/x)\right)^Q=\beta \cdot h(x^Q).
\]
Then $f(x):=x^r h(x^{Q-1})$ permutes\/ $\F_{Q^2}$ if and only if all of the following hold:
\begin{enumerate}
\item $\gcd(r,Q-1)=1$
\item $\gcd(r-d,Q+1)=1$
\item $h(x)$ has no roots in $\mu_{Q+1}$.
\end{enumerate}
\end{thm}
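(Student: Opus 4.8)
The plan is to follow the same strategy used in the proofs of Theorems~\ref{thmb} and \ref{thma}: reduce the permutation question on $\F_{Q^2}$ to a question about $\mu_{Q+1}$ via Lemma~\ref{lem}, and then exploit the functional equation to simplify the induced map on $\mu_{Q+1}$. Applying Lemma~\ref{lem} with $d=Q-1$ and $q=Q^2$ (so that $(q-1)/d = Q+1$), the polynomial $f(x)=x^r h(x^{Q-1})$ permutes $\F_{Q^2}$ if and only if $\gcd(r,Q+1)=1$ and $g(x):=x^r h(x)^{Q-1}$ permutes $\mu_{Q+1}$. This already accounts for one coprimality condition, though I must reconcile it with the stated conditions (1) and (2); I expect the functional equation to force a relation that converts $\gcd(r,Q+1)=1$ into the pair of conditions as stated, so checking this bookkeeping carefully will be one thing to watch.

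First I would use hypothesis (3) to guarantee that $h$ has no roots in $\mu_{Q+1}$, so that $h(\mu_{Q+1})\subseteq\F_{Q^2}^*$ and hence $g(\mu_{Q+1})\subseteq\mu_{Q+1}$; this reduces "permutes" to "injective" on the finite set $\mu_{Q+1}$, exactly as in the earlier proofs. Next I would compute $g(\alpha)$ for $\alpha\in\mu_{Q+1}$. The key computational input is the functional equation $\bigl(x^d h(1/x)\bigr)^Q = \beta\, h(x^Q)$. For $\alpha\in\mu_{Q+1}$ we have $\alpha^Q=\alpha^{-1}$ and $\alpha^{Q+1}=1$, so $h(\alpha)^Q = h(\alpha^Q)^{\,Q^2}\cdot(\text{correction})$—more precisely, evaluating the functional equation at $x=\alpha$ and using $\alpha^{-1}=\alpha^Q$ should express $h(\alpha)^Q$ in terms of $\alpha$ and $h(\alpha)$ up to an explicit monomial factor and the constant $\beta$. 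Substituting this into $g(\alpha)=\alpha^r h(\alpha)^{Q-1}=\alpha^r h(\alpha)^Q/h(\alpha)$ should collapse $g(\alpha)$ to a pure monomial in $\alpha$, say $g(\alpha)=c\,\alpha^{r-d}$ for an explicit constant $c$ (involving $\beta$) independent of $\alpha$.

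Granting that collapse, $g$ is injective on $\mu_{Q+1}$ if and only if $\alpha\mapsto\alpha^{r-d}$ is injective on $\mu_{Q+1}$, which happens precisely when $\gcd(r-d,Q+1)=1$; this yields condition (2). The remaining condition (1), $\gcd(r,Q-1)=1$, should emerge from the coprimality clause of Lemma~\ref{lem} together with the observation that $\gcd(r,Q+1)=1$ and $\gcd(r-d,Q+1)=1$ overlap in a way that shifts the relevant modulus—here I would mimic the remark in the earlier proofs that $\gcd(n+k(Q+1),Q-1)=1$ is equivalent to $\gcd(n+2k,Q-1)=1$, adapting it to the present $r,d$ setup. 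The main obstacle I anticipate is the second step: carefully manipulating the functional equation to show that $h(\alpha)^{Q-1}$ reduces to a single monomial in $\alpha$, including getting the exponent $r-d$ and tracking how $\beta\in\mu_{Q+1}$ (rather than an arbitrary nonzero constant) ensures the multiplicative factor $c$ lands in $\mu_{Q+1}$ so that $g$ genuinely maps $\mu_{Q+1}$ to itself. Once that monomialization is established, the injectivity criterion and the coprimality bookkeeping are routine.
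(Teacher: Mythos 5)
Your overall strategy is exactly the paper's: apply Lemma~\ref{lem}, then use the functional equation to monomialize the induced map on $\mu_{Q+1}$, and that computation works as you predict --- evaluating $\left(x^d\, h(1/x)\right)^Q=\beta\, h(x^Q)$ at $x=\alpha\in\mu_{Q+1}$ and using $\alpha^Q=\alpha^{-1}$ gives $h(\alpha^{-1})^{Q-1}=\beta\alpha^{d}$, hence $g(\zeta)=\beta\zeta^{r-d}$ for all $\zeta\in\mu_{Q+1}$, so $g$ permutes $\mu_{Q+1}$ precisely when $\gcd(r-d,Q+1)=1$ (and condition (3) is necessary since $h(\alpha)=0$ forces $g(\alpha)=0\notin\mu_{Q+1}$). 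However, your application of Lemma~\ref{lem} contains a genuine error, and the ``bookkeeping'' you defer would fail as planned. In the lemma's notation the inner exponent of $f$ is $(q-1)/d$; here $q=Q^2$ and the inner exponent is $Q-1$, so the lemma's $d$ equals $Q+1$, not $Q-1$. The coprimality clause is therefore $\gcd(r,(q-1)/d)=\gcd(r,Q-1)=1$, which is the theorem's condition (1) verbatim, while the induced map $x^r h(x)^{Q-1}$ acts on $\mu_{Q+1}$. You instead took $d=Q-1$, producing $\gcd(r,Q+1)=1$ --- a condition that is neither necessary for $f$ to permute $\F_{Q^2}$ nor implied by the theorem's hypotheses, since in the end $g(\zeta)=\beta\zeta^{r-d}$ constrains only $r-d$ modulo $Q+1$. (Your setup is also internally inconsistent: with the lemma's $d=Q-1$ the induced map would be $x^r h(x)^{Q+1}$ on $\mu_{Q-1}$, yet you kept $x^r h(x)^{Q-1}$ on $\mu_{Q+1}$.)

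The repair you sketch --- that $\gcd(r,Q+1)=1$ and $\gcd(r-d,Q+1)=1$ ``overlap in a way that shifts the relevant modulus'' to yield $\gcd(r,Q-1)=1$ --- cannot work: no combination of congruence conditions modulo $Q+1$ produces a condition modulo $Q-1$, and the remark you cite from the earlier proofs (that $\gcd(n+k(Q+1),Q-1)=1$ is equivalent to $\gcd(n+2k,Q-1)=1$) is a simplification within the single modulus $Q-1$, obtained by reducing $n+k(Q+1)\equiv n+2k \pmod{Q-1}$, not a change of modulus. Fortunately the fix is a one-line correction of the parameter matching in Lemma~\ref{lem}: once the lemma is applied with its $d$ equal to $Q+1$, condition (1) falls out directly, and the remainder of your plan goes through exactly as in the paper.
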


\begin{remark} The polynomials $h(x)$ satisfying the hypotheses of Theorem~\ref{genthm} can be
described explicitly in terms of their coefficients.  They are
$h(x)=\sum_{i=0}^d a_i x^i$ where $a_0\ne 0$ and, for $0\le i\le d/2$, we have $a_i\in\F_{Q^2}$
and $a_{d-i}=(\beta a_i)^Q$.
\end{remark}

\begin{proof}[Proof of Theorem~\ref{genthm}]
By Lemma~\ref{lem}, we see that $f(x)$ permutes $\F_{Q^2}$ if and only if $\gcd(r,Q-1)=1$ and
$g(x):=x^r h(x)^{Q-1}$ permutes $\mu_{Q+1}$.  We may assume that $h(x)$ has no roots in $\mu_{Q+1}$, since
otherwise $g$ cannot permute $\mu_{Q+1}$.
Then any $\alpha\in\mu_{Q+1}$ satisfies
\[
g(\alpha)=\alpha^r \frac{h(\alpha)^Q}{h(\alpha)} = \alpha^r \frac{h(\alpha^{-Q})^Q}{h(\alpha)}
= \alpha^{r-d}\beta,
\]
so $g$ permutes $\mu_{Q+1}$ if and only if $\gcd(r-d,Q+1)=1$.
\end{proof}

We now illustrate Theorem~\ref{genthm} in the special case $h(x)=x^d+\beta^{-1}$.

\begin{cor} \label{gencor} Let $Q$ be a prime power, let $r$ and $d$ be positive integers, and let $\beta$ be a
$(Q+1)$-th root of unity in\/ $\F_{Q^2}$.  Then $x^{r+d(Q-1)}+\beta^{-1} x^r$ permutes\/ $\F_{Q^2}$
if and only if all of the following hold:
\begin{enumerate}
\item $\gcd(r,Q-1)=1$
\item $\gcd(r-d,Q+1)=1$
\item $(-\beta)^{(Q+1)/\gcd(Q+1,d)}\ne 1$.
\end{enumerate}
\end{cor}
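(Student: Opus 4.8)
The plan is to derive Corollary~\ref{gencor} as the special case of Theorem~\ref{genthm} in which $h(x)=x^d+\beta^{-1}$. First I would check that this $h$ satisfies the hypotheses of that theorem. Plainly $h(0)=\beta^{-1}\ne 0$, and since $d\ge 1$ the polynomial $h$ has degree exactly $d$. To verify the functional equation, I would compute $x^d h(1/x)=1+\beta^{-1}x^d$, so that $\bigl(x^d h(1/x)\bigr)^Q=1+\beta^{-Q}x^{dQ}$, while $\beta h(x^Q)=1+\beta x^{dQ}$; these agree precisely because $\beta^{Q+1}=1$ forces $\beta^{-Q}=\beta$. Thus Theorem~\ref{genthm} applies.

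Next I would identify the polynomial $f$ of Theorem~\ref{genthm} with the one in the corollary: since $h(x^{Q-1})=x^{d(Q-1)}+\beta^{-1}$, we have $f(x)=x^r h(x^{Q-1})=x^{r+d(Q-1)}+\beta^{-1}x^r$. Conditions (1) and (2) of Theorem~\ref{genthm} are literally conditions (1) and (2) of the corollary, so the only remaining task is to translate condition (3) of the theorem, namely that $h$ has no root in $\mu_{Q+1}$, into the stated condition $(-\beta)^{(Q+1)/\gcd(Q+1,d)}\ne 1$.

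This translation is the crux of the argument, and I expect it to be the only substantive step. The roots of $h$ are the solutions of $x^d=-\beta^{-1}$, so $h$ has a root in $\mu_{Q+1}$ if and only if $-\beta^{-1}$ is a $d$-th power in the cyclic group $\mu_{Q+1}$ of order $Q+1$. Here I would first note that $-\beta^{-1}$ does lie in $\mu_{Q+1}$: indeed $\beta^{-1}\in\mu_{Q+1}$, and $-1\in\mu_{Q+1}$ since $(-1)^{Q+1}=1$ (this is automatic in characteristic $2$, and holds because $Q+1$ is even when $Q$ is odd). The image of the $d$-th power map on the cyclic group $\mu_{Q+1}$ is its unique subgroup of order $(Q+1)/\gcd(Q+1,d)$, namely $\mu_{(Q+1)/\gcd(Q+1,d)}$. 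Hence, writing $m:=(Q+1)/\gcd(Q+1,d)$, the element $-\beta^{-1}$ is a $d$-th power in $\mu_{Q+1}$ if and only if $(-\beta^{-1})^{m}=1$.

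Finally I would convert this into the form stated in the corollary. Since $(-\beta)(-\beta^{-1})=1$, we have $(-\beta)^m(-\beta^{-1})^m=1$, so $(-\beta^{-1})^m=1$ if and only if $(-\beta)^m=1$. Therefore $h$ has no root in $\mu_{Q+1}$ exactly when $(-\beta)^{(Q+1)/\gcd(Q+1,d)}\ne 1$, which is condition (3) of the corollary. Combining this with the matching of conditions (1) and (2) completes the deduction from Theorem~\ref{genthm}.
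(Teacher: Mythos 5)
Your proposal is correct and takes essentially the same route as the paper: both deduce the corollary by specializing Theorem~\ref{genthm} to $h(x)=x^d+\beta^{-1}$ and noting that $h$ has a root in $\mu_{Q+1}$ if and only if $-\beta^{-1}$ lies in $(\mu_{Q+1})^d=\mu_{(Q+1)/\gcd(Q+1,d)}$. You merely spell out two details the paper leaves implicit, namely the verification of the functional equation $\left(x^d h(1/x)\right)^Q=\beta\,h(x^Q)$ and the equivalence of $(-\beta^{-1})^{(Q+1)/\gcd(Q+1,d)}\ne 1$ with $(-\beta)^{(Q+1)/\gcd(Q+1,d)}\ne 1$.
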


\begin{proof} Since $h(x):=x^d+\beta^{-1}$ satisfies the hypotheses of Theorem~\ref{genthm},
the Corollary will follow from Theorem~\ref{genthm} once we show that the final conclusion in the Corollary
is equivalent to the final conclusion in the Theorem.  For this, note that $h(x)$ has roots in $\mu_{Q+1}$
if and only if $-\beta^{-1}$ is  in $(\mu_{Q+1})^d$, which equals $\mu_{(Q+1)/\gcd(Q+1,d)}$.
\end{proof}

In case $Q$ is even, Corollary~\ref{gencor} is a refinement of \cite[Thm.~1]{TZHL}.

\end{document}